\newtheorem{theorem}{Theorem}
\newtheorem{remark}{Remark}
\newtheorem{definition}{Definition}
\newtheorem{corollary}{Corollary}
\newtheorem{lemma}{Lemma}
\newtheorem{proposition}{Proposition}
\newcommand{\C}{\mathbb{C}}
\def\eqref#1{(\ref{#1})}
\begin{document}
\author{
Colin Christopher\\
School of Engineering, Computing and Mathematics\\Plymouth University\\
Plymouth, PL4 8AA, UK\\
\tt{C.Christopher@plymouth.ac.uk}\\
{}\\
Sebastian Walcher\\
Fachgruppe Mathematik, RWTH Aachen\\
52056 Aachen, Germany\\
\tt{walcher@mathga.rwth-aachen.de}
}
\title{Exceptional rational vector fields with an elementary first integral}
\maketitle

\begin{abstract}
We consider complex rational vector fields that admit a first integral whose logarithmic derivative lies in a finite extension of the  rational function field $K$. In view of the Prelle-Singer theorem, these are the rational vector fields that admit an elementary first integral. Elementary integrable vector fields which are not Darboux integrable --- thus the extension field is necessarily a proper extension of $K$ --- may be called exceptional by an observation in an earlier paper by Christopher et al. For dimension two we characterize all possible algebraic extension fields underlying the exceptional cases, provide a construction of all exceptional vector fields, and obtain some criteria that restrict the degree of $L$.\\
MSC (2020): 34A05, 12H05, 34M15\\
Key words: Elementary integrability, Darboux integrability, cyclic field extensions.

\end{abstract}

\section{Introduction}

In the present paper we discuss rational complex vector fields that admit a first integral in some elementary extension of the rational function field over $\mathbb C$.\footnote{Throughout, the field $\mathbb C$ may be replaced by any algebraically closed field of characteristic zero.} \\
Thus we will consider vector fields
\begin{equation}\label{vfield3d}
\mathcal{X}=\sum_{i=1}^nP_i\frac{\partial}{\partial x_i}
\end{equation}
on $\mathbb C^n$ with rational $P_i$; equivalently the corresponding ($n-1$)-forms
\begin{equation}\label{2form}
\Omega=\sum_{i=1}^nP_i\,dx_1\wedge\cdots\wedge\widehat{dx_i}\wedge\cdots dx_n
\end{equation}
defined over $K:=\mathbb C(x_1,\ldots,x_n)$.\\
We recall some definitions and facts (see e.g. Rosenlicht \cite{Ros}).
\begin{definition}\label{elemdef}{\em
\begin{enumerate}[(a)]
\item A differential extension field $M$ of $K$ is {\it elementary} if and only if $K$ and $M$ have the same constants
 and there exists a tower of fields
\begin{equation}\label{LiouvExeq} K=K_0 \subset K_1 \subset \ldots \subset K_m=M, \end{equation}
such that for each $i\in\{0,\ldots,m-1\}$ we have one of the following:
\begin{enumerate}[(i)]
\item $K_{i+1}=K_i(t_i)$, where $t_i\not=0$ and $dt_i/t_i=dR_i $ with some $R_i \in K_i$ (adjoining an exponential);
\item $K_{i+1}=K_i(t_i)$, where $dt_i=dR_i/R_i$ with $R_i \in K_i$ (adjoining a logarithm);
\item $K_{i+1}$ is a finite algebraic extension of $K_i$.
\end{enumerate}
\item A non-constant
element, $\phi$, of an elementary extension of $K$ is called an {\it elementary first integral}  of the vector field $\mathcal{X}$ if it satisfies $\mathcal{X} \phi=0$ or, equivalently, $d\phi \wedge  \Omega=0$.
\end{enumerate}
}
\end{definition}
The condition on the constants is unproblematic in our context; see e.g. \cite{ACPW}, Remark 1.

An elementary first integral exists if and only if there exist an elementary extension $M$ of $K$ and $\widetilde v\in M$, $\widetilde u_1,\ldots,\widetilde u_r\in L^*$, and $\widetilde c_1,\ldots,\widetilde c_r\in \mathbb C$ such that
\begin{equation}\label{elemrelcore}
    \left(\sum_{i=1}^r \widetilde c_i\,\dfrac{d\widetilde u_i}{\widetilde u_i} +d\widetilde v\right)\wedge\Omega=0, \quad \sum_{i=1}^r \widetilde c_i\,\dfrac{d\widetilde u_i}{\widetilde u_i} +d\widetilde v\not=0.
\end{equation}
To verify the non-obvious implication, adjoin logarithms if needed. One may choose the $\widetilde c_i$ to be linearly independent over the rationals $\mathbb Q$.

Our vantage point is the following version of the main theorem in Prelle and Singer \cite{PreSin}, which we state for rational ($n-1$)-forms\footnote{Huang and Zhang \cite{HZ} recently extended the Prelle-Singer theorem to $(n-1)$-forms admitting $n-1$ independent elementary first integrals.}.
\begin{theorem}\label{algaddendum}
Let $K = \C(x_1,\ldots, x_n)$, and $\Omega$ the ($n-1$)--form {\eqref{2form}} over $K$. If there exists an elementary extension $M$ of $K$ such that a relation \eqref{elemrelcore} holds with $\widetilde u_i,\,\widetilde v\in L$, then there exists a finite algebraic extension $L$ of $K$, $L\subseteq M$,  such that a relation 
\begin{equation}\label{elemrel}
    \left(\sum_{i=1}^m c_i\,\dfrac{d u_i}{ u_i} +d v\right)\wedge\Omega=0, \quad \sum_{i=1}^m  c_i\,\dfrac{d u_i}{ u_i} +d v\not=0.
\end{equation}
holds with $ c_i\in \mathbb C$, and $u_i,\, v\in \widetilde L$.
\end{theorem}
By \cite{CLPW}, Theorem 2 (which we recall below), elementary integrability implies Darboux integrability --- thus one may take  $L=K$ --- unless all $\widetilde u_i$ have constant norm and $\widetilde v$ has constant trace. We call vector fields of this type exceptional. We will discuss these exceptional elementary integrable vector fields in dimension two, continuing \cite{CLPW}.
The principal results are as follows:
\begin{itemize}
\item We show that the underlying (minimal) field extensions are cyclic extensions of $K$;
\item we  obtain a construction of all exceptional vector fields; and
\item we provide criteria which guarantee  that the degree of the field extension equals two, including  a complete characterization when relation \eqref{elemrel} holds with $m=1$.
\end{itemize}


\section{Darboux vs. exceptional cases}
From here on we assume that $L$ is algebraic over $K$, and minimal. The proof of Theorem 2(a) in \cite{CLPW}, although written for the setting with dimension two, carries over directly to show that vector fields admitting a relation \eqref{elemrel} are frequently Darboux integrable. In precise terms:
\begin{proposition}
    Let $L$ be a finite algebraic extension of $K$, and let $v\in L$, $u_1,\ldots,u_m\in L^*$ and $c_1,\ldots,c_m\in \mathbb C$ such that \eqref{elemrel} holds.
Then, whenever $v$ has non-constant trace, or some $u_i$ has non-constant norm, there exist $\widehat v\in K$ and $\widehat u_1,\ldots, \widehat u_{\widehat m} \in K^*$ so that 
\begin{equation*}
    \left(\sum_{i=1}^{\widehat m} c_i\,\dfrac{d\widehat u_i}{\widehat u_i} +d\widehat v\right)\wedge\Omega=0, \quad \sum_{i=1}^{\widehat m} c_i\,\dfrac{d\widehat u_i}{\widehat u_i} +d\widehat v\not=0.
\end{equation*}
In particular, $\Omega$ admits a Darboux first integral.\footnote{Note that the class of Darboux first integrals includes rational first integrals. An explicit description of Darboux integrable vector fields in dimension two is provided in Chavarriga et al.~\cite{CGGL}.}
\end{proposition}

In view of this result we call rational vector fields with an elementary  first integral {\em exceptional} if $v$ has constant trace and all $u_i$ have constant norm; thus necessarily $L\not=K$. When discussing exceptional cases we may obviously assume trace zero and norm one, respectively.
\section{Exceptional cases in dimension two}
From here on we restrict attention to exceptional vector fields (or one-forms) in dimension two. We will generalize and improve results from \cite{CLPW}.\\ 
Changing notation, we let $K=\mathbb C(x,y)$, and consider a one-form
\[
\omega=P\,dx+Q\,dy,
\]
with corresponding vector field $\mathcal X=-Q\,\partial/\partial x +P\,\partial/\partial y$, that admits an elementary first integral, thus 
\begin{equation}\label{elem2d}
\left(\sum_{i=1}^m c_i\dfrac{du_i}{u_i}+dv\right)\wedge\omega=0,
\end{equation}
with $v$ and the $u_i$ in some finite algebraic extension $M$ of $K$, and complex constants $c_i$. One may restate \eqref{elem2d} as
\begin{equation}\label{elem2dfunc}
   d\psi\wedge\omega=0 ,\text{  with  }\psi:= \sum_{i=1}^mc_i\,\log u_i \, +v.
\end{equation}
We recall and reprove a familiar fact about integrating factors; see e.g. Prelle and Singer \cite{PreSin}, Proposition 1.
\begin{lemma} There exists $\ell\in M$ such that
\begin{equation}\label{2dint}
    \sum c_i\dfrac{du_i}{u_i}+dv=\ell\omega; \quad d\omega=-\dfrac{d\ell}{\ell}\wedge\omega.
\end{equation}
\end{lemma}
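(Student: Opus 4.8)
The plan is to separate the claim into two independent parts: first, the existence of $\ell\in M$ with $\sum c_i\,du_i/u_i+dv=\ell\omega$, which is elementary linear algebra over $M$; and second, the identity $d\omega=-(d\ell/\ell)\wedge\omega$, which is obtained by applying the exterior derivative to the first relation.

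For the first part I would write $\omega=P\,dx+Q\,dy$ and, setting $\eta:=\sum c_i\,du_i/u_i+dv$, expand $\eta=A\,dx+B\,dy$ with $A,B\in M$; this is legitimate because the derivations $\partial/\partial x,\ \partial/\partial y$ of $K$ extend uniquely to the finite algebraic extension $M$ (as $\mathrm{char}\,\mathbb C=0$), so $du_i$ and $dv$ are $M$-linear combinations of $dx$ and $dy$. The hypothesis $\eta\wedge\omega=0$ is then the single scalar relation $AQ-BP=0$ in $M$. Since $\omega\neq0$, at least one of $P,Q$ is nonzero; if $P\neq0$ put $\ell:=A/P\in M$, whence $B=AQ/P=\ell Q$ and therefore $\eta=\ell\omega$, and symmetrically $\ell:=B/Q$ works if $Q\neq0$. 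Because $\eta\neq0$ by the hypothesis in \eqref{elem2d}, this forces $\ell\neq0$.

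For the second part, I would apply $d$ to $\eta=\ell\omega$, obtaining $d\eta=d\ell\wedge\omega+\ell\,d\omega$. The left-hand side vanishes: $d(du_i/u_i)=d\!\left(u_i^{-1}\,du_i\right)=-u_i^{-2}\,du_i\wedge du_i=0$ and $d(dv)=0$, using $du_i\wedge du_i=0$ and $d^2=0$, which continue to hold for forms over $M$ since the extended derivation still satisfies $d^2=0$. Hence $\ell\,d\omega=-d\ell\wedge\omega$, and dividing by $\ell\neq0$ yields $d\omega=-(d\ell/\ell)\wedge\omega$.

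The only point that needs care, rather than being a genuine obstacle, is the bookkeeping behind the formal calculus of differentials over the algebraic function field $M$: the unique extension of the derivations to $M$, and the identities $d^2=0$ and $d(df/f)=0$. All of these are standard, so the argument is short and contains no substantive difficulty — which is presumably why the statement is recalled here as a familiar fact.
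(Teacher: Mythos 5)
Your proof is correct and follows essentially the same route as the paper: the coordinate computation with $AQ-BP=0$ is just an explicit verification of the paper's statement that the one-forms $\beta$ with $\beta\wedge\omega=0$ form a one-dimensional $M$-vector space, and your second step is exactly the paper's observation that the identity follows from $d(\ell\omega)=0$. Nothing is missing; you simply spell out details (extension of the derivations to $M$, $\ell\neq 0$) that the paper leaves implicit.
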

\begin{proof}
    The first statement holds because the one-forms $\beta$ with $\beta\wedge\omega=0$ form a vector space of dimension one over $M$; the second follows from $d(\ell\,\omega)=0$.
\end{proof}
\begin{proposition}\label{cyclo}Assume that $\omega$ is exceptional, and let $L\subseteq M$ be a minimal algebraic extension of $K$ such that \eqref{2dint} holds with $v$ and all $u_i$ in $L$. Then:
\begin{enumerate}[(a)]
    \item $L=K(\ell)$ is Galois over $K$, with cyclic Galois group $G\cong \mathbb Z_n$.
    \item Given a generator $\tau$ of $G$, there is a primitive $n^{\rm th}$ root of unity, $\zeta$, such that $\tau(\ell)=\zeta\,\ell$, and moreover $\ell=\sqrt[n]{k}$ for some $k\in K$.
\end{enumerate}
\end{proposition}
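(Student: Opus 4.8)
The plan is to pass to the Galois closure $N$ of $L/K$, write $\mathcal G=\mathrm{Gal}(N/K)$, and use that every $\tau\in\mathcal G$ fixes $\omega$ (its coefficients lie in $K$) and commutes with $d$. Applying $\tau$ to the two relations in \eqref{2dint} gives $\tau(\ell)\,\omega=\sum c_i\,d\tau(u_i)/\tau(u_i)+d\tau(v)$ together with $d\omega=-\,d\tau(\ell)/\tau(\ell)\wedge\omega$; subtracting the latter from $d\omega=-\,d\ell/\ell\wedge\omega$ leaves
\[
\frac{d\big(\tau(\ell)/\ell\big)}{\tau(\ell)/\ell}\wedge\omega=0\qquad(\tau\in\mathcal G).
\]
After a harmless constant shift of $v$ and a reduction of the representation \eqref{elem2d} to one with fewest terms, I also assume $\mathrm{Tr}_{L/K}(v)=0$ and that $u_1,\dots,u_m$ are multiplicatively independent modulo $\mathbb C^\ast$; neither affects $d\psi=\ell\,\omega$ nor the minimality of $L$.

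Next I would show $L=K(\ell)$. The inclusion $K(\ell)\subseteq L$ is immediate, since $\ell=\big(\sum c_i\,du_i/u_i+dv\big)/\omega$. For the converse, average \eqref{2dint} over $H:=\mathrm{Gal}(N/K(\ell))$: each $\rho\in H$ fixes $\ell$, so $\ell\,\omega=\sum c_i\,d\rho(u_i)/\rho(u_i)+d\rho(v)$, and summing over $\rho\in H$ gives $|H|\,\ell\,\omega=\sum c_i\,dN_i/N_i+d\,T$ with $N_i:=\prod_{\rho\in H}\rho(u_i)$ and $T:=\sum_{\rho\in H}\rho(v)$ both in $N^{H}=K(\ell)$. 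This is again a relation of the form \eqref{2dint} over $K(\ell)$ with nonzero cofactor $|H|\ell$, so minimality of $L$ forces $L\subseteq K(\ell)$.

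The heart of the proof is to show $w_\tau:=\tau(\ell)/\ell\in\mathbb C^\ast$ for all $\tau\in\mathcal G$. Writing $dw_\tau/w_\tau=g\,\omega$ as above and using that $dw_\tau/w_\tau$ is closed, one gets $d(g\omega)=0$, which combined with $d\omega=-\,d\ell/\ell\wedge\omega$ rearranges to $d(g/\ell)\wedge\omega=0$: thus $h:=g/\ell\in N$ is a first integral of $\omega$. If $h$ were non-constant, an elementary symmetric function of its $K$-conjugates (which are again first integrals, $\omega$ being over $K$) would be a non-constant \emph{rational} first integral, so \eqref{2dint} would hold over $K$, contradicting that $\omega$ is exceptional. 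Hence $h$ is a constant $c$, and $dw_\tau/w_\tau=c\,\ell\,\omega=\sum(cc_i)\,du_i/u_i+d(cv)$. Suppose $c\ne0$. If $1,cc_1,\dots,cc_m$ were $\mathbb Q$-linearly independent, the Rosenlicht independence lemma applied to $dw_\tau/w_\tau-\sum(cc_i)\,du_i/u_i-d(cv)=0$ would make $w_\tau$ constant, contradicting $dw_\tau/w_\tau=c\,\ell\,\omega\ne0$; hence they are $\mathbb Q$-dependent, and the same lemma (applied after passing to a $\mathbb Q$-basis of their span) produces a nontrivial relation $w_\tau^{\,n_0}\prod_i u_i^{\,n_i}\in\mathbb C^\ast$ with $n_0\ne0$, since the $u_i$ are multiplicatively independent modulo $\mathbb C^\ast$. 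Differentiating this relation and substituting $dw_\tau/w_\tau=c\,\ell\,\omega$ gives $\sum_i(n_0cc_i+n_i)\,du_i/u_i+d(n_0cv)=0$; the independence lemma now forces $c_i=-n_i/(n_0c)\in\mathbb Q$ for every $i$ and $v\in\mathbb C$, so $v=0$. For $m\ge2$ this contradicts the $\mathbb Q$-linear independence of the $c_i$; for $m=1$ it makes $\psi=c_1\log u_1$ with $c_1\in\mathbb Q$, so $e^{\psi}=u_1^{\,c_1}$ is a non-constant \emph{algebraic} first integral of $\omega$, again yielding a rational first integral and contradicting exceptionality. Therefore $c=0$ and $w_\tau$ is constant.

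The conclusions then follow quickly. Since $\tau(\ell)=w_\tau\ell\in K(\ell)=L$ for every $\tau$, the extension $L/K$ is normal, hence Galois; put $G=\mathrm{Gal}(L/K)$ and $n=[L\!:\!K]$. The map $\chi\colon G\to\mathbb C^\ast$, $\tau\mapsto w_\tau$, is a homomorphism (the cocycle identity $w_{\sigma\tau}=\sigma(w_\tau)\,w_\sigma$ collapses to $w_{\sigma\tau}=w_\tau w_\sigma$ as $w_\tau$ is constant) and injective, because $w_\tau=1$ forces $\tau$ to fix $\ell$ and hence all of $L=K(\ell)$. So $G$ embeds in a finite subgroup of $\mathbb C^\ast$ and is therefore cyclic, $G\cong\mathbb Z_n$; for a generator $\tau$ the element $\zeta:=w_\tau$ generates $\chi(G)$, so it is a primitive $n$-th root of unity with $\tau(\ell)=\zeta\ell$, and $\tau(\ell^n)=\zeta^n\ell^n=\ell^n$ shows $\ell^n\in L^{G}=K$, i.e.\ $\ell=\sqrt[n]{k}$ with $k:=\ell^n$. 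I expect the decisive difficulty to be the step that $w_\tau$ is constant — specifically excluding $c\ne0$ — since that is where the $\mathbb Q$-linear independence of the $c_i$, the absence of a rational first integral, and the constant trace of $v$ must all be used together, and is presumably the point at which the present argument improves on \cite{CLPW}.
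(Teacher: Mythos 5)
Your argument is correct in substance and follows essentially the paper's route: the averaging/descent trick (you average over $\mathrm{Gal}(N/K(\ell))$ and invoke minimality at the start, the paper averages over the kernel of the character $\sigma\mapsto\lambda_\sigma$ on the full Galois group and invokes minimality at the end — a cosmetic difference), the key relation $\frac{d(\tau(\ell)/\ell)}{\tau(\ell)/\ell}\wedge\omega=0$, and the finish via the injective homomorphism $\tau\mapsto w_\tau$ into $\mathbb C^*$, cyclicity, and $\ell^n\in K$. The one place you genuinely diverge is the constancy of $w_\tau=\tau(\ell)/\ell$, and there you have made it much harder than necessary: from $\frac{dw_\tau}{w_\tau}\wedge\omega=0$ you already get $dw_\tau\wedge\omega=0$, so $w_\tau$ is itself a first integral algebraic over $K$, and exceptionality (algebraic $\Rightarrow$ rational first integral) forces it to be constant immediately — this is exactly the paper's two-line argument, and it makes your entire detour through the cofactor $g$, the constant $c=g/\ell$, Rosenlicht's independence lemma, and the multiplicative-independence and trace-zero normalizations superfluous. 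That detour is essentially sound, but it is also where your only slip sits: in the $m=1$ subcase the relation $n_0cc_i+n_i=0$ gives $cc_1\in\mathbb Q$, not $c_1\in\mathbb Q$, so $u_1^{c_1}$ need not be algebraic; the contradiction still stands because $dv=0$ already makes $u_1$ itself a nonconstant algebraic first integral of $\omega$. Note also that normalizing $\mathrm{Tr}_{L/K}(v)=0$ by a ``constant shift'' tacitly uses that the trace is constant in the exceptional case (the paper's Section 3 Proposition); since you never actually need the trace-zero assumption, this is harmless, but it should not be presented as free.
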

\begin{proof}\begin{enumerate}[(i)]
    \item We may assume that $M$ is Galois over $K$, with Galois group $\widetilde G$. Assume that $\sigma(\ell)/\ell$ is not constant for some $\sigma\in\widetilde G$. Then \eqref{2dint} implies
    \[
    d\omega=-\dfrac{d\sigma(\ell)}{\sigma(\ell)}\wedge\omega,
    \]
    and
    \[
    d\left(\dfrac{\sigma(\ell)}{\ell}\right)\wedge\omega=\dfrac{1}{\ell^2}\left(\ell\,d\sigma(\ell)-\sigma(\ell)\,d\ell\right)\wedge\omega=0.
    \]
    Thus, if $\sigma(\ell)/\ell$ is not constant for some $\sigma$, then $\omega$ admits an algebraic first integral, hence a rational first integral (see e.g. \cite{CLPW}, Lemma 2). 
    \item Since $\omega$ represents an exceptional case, there exists for every $\sigma\in \widetilde G$ some $\lambda_\sigma\in\mathbb C^*$ such that
    \[
    \sigma(\ell)=\lambda_\sigma\cdot\ell,
    \]
    and one sees that
    \[
    \widetilde G\to\mathbb C^*,\quad \sigma\mapsto\lambda_\sigma
    \]
    is a homomorphism of groups. Let $\widetilde G_0$ be the kernel of this homomorphism. Then $\widetilde G/\widetilde G_0\cong \mathbb Z_n$ for some $n>1$, since all finite subgroups of $\mathbb C^*$ are cyclic.
    \item Let $L$ be the fixed field of $\widetilde G_0$. Then $L$ is a Galois extension of $K$, with Galois group $G\cong\widetilde G/\widetilde G_0$. Moreover \eqref{elem2d} implies
    \[
    \begin{array}{rcl}
    \sum c_i\left(\dfrac{d\prod_{\sigma\in\widetilde G_0}\sigma(u_i)}{\prod_{\sigma\in\widetilde G_0}\sigma(u_i)}\right)+\sum_{\sigma\in\widetilde G_0}d\sigma(v) &=&\\
        \sum c_i\left(\sum_{\sigma\in\widetilde G_0}\dfrac{d\sigma(u_i)}{\sigma(u_i)}\right)+\sum_{\sigma\in\widetilde G_0}d\sigma(v) &=&\\
        \sum_{\sigma\in\widetilde G_0}\left(\sum c_i\dfrac{d\sigma(u_i)}{\sigma(u_i)}+dv\right)&=&|\widetilde G_0|\cdot\ell\, \omega,
    \end{array}
    \]
    hence we may assume that \eqref{elem2d} holds with $v$ and all $u_i$ in $L$. 
    \item Let $\tau$ be a generator of $G$. Then $\lambda_\tau=\zeta$ is a primitive $n^{\rm th}$ root of unity. Moreover $\tau(\ell^n) = \zeta^n\ell^n=\ell^n$, hence $k:=\ell^n\in K$. Since $K(\ell)$ is normal (with all roots of unity in $K$), minimality shows $L=K(\ell)=K(\sqrt[n]{k})$.
\end{enumerate}
\end{proof}

\begin{remark}{\em 
     Proposition 2 in Prelle and Singer \cite{PreSin} shows that every elementary integrable vector field in dimension two admits an integrating factor $\widehat \ell$, such that $\widehat \ell^{n^*}\in K$ for some $n^*$. In the exceptional case we see that the integrating factor generates the extension field $L$ over $K$.}
\end{remark}
In the following subsections we consider the construction of exceptional vector fields, and conditions for their existence, given the degree $[L:K]$ of the cyclic field extension. We will discuss degree two separately, providing a detailed description of the exceptional vector fields. 
\subsection{Quadratic extensions}
For degree two field extensions, we have the setting of Proposition \ref{cyclo}, with $\ell^2=k\in K$. In the case  $m=1$ (with just one logarithmic term) in \eqref{elem2d}, all two dimensional exceptional vector fields were determined in \cite{CLPW}, Theorem 3. Here we consider the case of general $m\geq 1$, from a different vantage point, and obtain an explicit description of all exceptional vector fields.

From $\ell^2=k$ we obtain
\[
2\ell\ell_x=k_x;\quad \ell_x=\dfrac{k_x}{2k}\cdot \ell;
\]
and similarly for partial derivatives with respect to $y$.
A trace zero element $v\in L$ is necessarily of the form $v=h\cdot \ell,\, h\in K$;  so  
\[
v_x=h_x\ell+h\ell_x=\left(h_x+\frac12 h\dfrac{k_x}{k}\right)\,\ell;
\]
and similarly for $v_y$.\\
$L$ admits a nontrivial automorphism $\tau$ which sends $\ell$ to $-\ell$. By Hilbert's Theorem 90 (see e.g. Lang \cite{SergeL}, Ch.~VI, Thm.~6.1) norm one elements are of the form
\[
u=\dfrac{\tau(w)}{w};\quad w=g+f\ell\in L^*,
\]
explicitly
\[
u=\dfrac{g^2+kf^2}{g^2-kf^2}+\dfrac{2gf}{g^2-kf^2}\ell.
\]
For nonconstant $u$ one has $f\not=0$, thus one may choose $f=1$, and 
\begin{equation}\label{gexpr}
    u=\dfrac{g^2+k}{g^2-k}+\dfrac{2g}{g^2-k}\ell=:a+b\ell,
\end{equation}
with
\begin{equation}\label{normonestuff}
    a^2-kb^2=1,\quad u^{-1}=a- b\ell.
\end{equation}
Differentiation of \eqref{gexpr} yields
\[
u_x=a_x+b_x\ell +b\ell_x=a_x+\left(b_x+\frac12b\dfrac{k_x}{k}\right)\,\ell;
\]
\[
\begin{array}{rcl}
  u_x/u&=   & aa_x-kb\left(b_x+\frac12b\dfrac{k_x}{k} \right)\\
     & &+\ell\cdot\left(-a_xb+ab_x+\frac12ab\dfrac{k_x}{k}\right).
\end{array}
\]
Differentiating the first identity in \eqref{normonestuff} with respect to $x$, one sees that the first term on the right hand side vanishes. After rearranging one gets
\[
\dfrac{u_x}{u}=ab\ell\,\left(\dfrac{(b/a)_x}{(b/a)}+\frac12\dfrac{k_x}{k}\right),
\]
and a similar expression holds for $u_y/u$.\\
It is straightforward to rewrite these expressions in terms of $g$. Now the elementary first integral, $\psi$, is the sum of some trace zero element $v$ and a $\mathbb C$-linear combination of logarithms of norm one elements $u_i$, which are, in turn, determined by certain rational $g_i$, according to \eqref{gexpr}. The final result is as follows:
\begin{proposition}\label{quadprop}
\begin{enumerate}[(a)]
    \item 
    Every exceptional vector field in dimension two is a rational multiple of 
    \[
    \begin{array}{ll}
       H:=  & \begin{pmatrix}
             -(h_y+\frac12h\dfrac{k_y}{k})\\
             h_x+\frac12h\dfrac{k_x}{k}
         \end{pmatrix} \\
         +\sum c_i\dfrac{1}{(g_i^2-k)^2}& \begin{pmatrix}
             -2\left(-g_i^2g_{i,y}+kg_{i,y}-g_ik_y\right)-g_i(g_i^2+k)\dfrac{k_y}{k}\\
             2\left(-g_i^2g_{i,x}+kg_{i,x}-g_ik_x\right)+g_i(g_i^2+k)\dfrac{k_x}{k}
         \end{pmatrix},
    \end{array}
    \]
    with rational functions $h$ and $g_i$.
    \item A vector field $H$ of the form in part (a) is exceptional if and only if it admits no rational first integral.
    \end{enumerate}
\end{proposition}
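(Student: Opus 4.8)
The plan is to obtain part~(a) by assembling the computations carried out just before the statement, and to reserve the real work for part~(b). For part~(a), let $\mathcal X$ be exceptional, with associated one-form $\omega=P\,dx+Q\,dy$ and elementary first integral $\psi=\sum c_i\log u_i+v$. In the quadratic case Proposition~\ref{cyclo} furnishes $L=K(\ell)$ with $\ell^2=k\in K$, where $\ell$ is the integrating factor of \eqref{2dint}, so $d\psi=\ell\,\omega$; moreover one may take $v$ of trace zero and each $u_i$ of norm one. Writing $v=h\ell$ ($h\in K$) and $u_i=a_i+b_i\ell$ as in \eqref{gexpr} (with $a_i^2-kb_i^2=1$), the identities established above for $v_x$ and $u_{i,x}/u_i$ show that $\psi_x/\ell$ and $\psi_y/\ell$ lie in $K$, so that $P=\psi_x/\ell$ and $Q=\psi_y/\ell$ are given explicitly by those formulas. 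Substituting $a_i=(g_i^2+k)/(g_i^2-k)$ and $b_i=2g_i/(g_i^2-k)$ — whence $a_ib_i=2g_i(g_i^2+k)/(g_i^2-k)^2$ and $\log(b_i/a_i)=\log 2+\log g_i-\log(g_i^2+k)$ — and differentiating turns $\mathcal X=-Q\,\partial_x+P\,\partial_y$ into the displayed $H$; the phrase ``rational multiple'' merely absorbs the freedom in the choice of integrating factor (equivalently of $k$ within its square class), which rescales $\omega$ and hence $\mathcal X$ by a rational function. This part is purely computational.

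For part~(b), first observe that an $H$ of the displayed form with $H\neq 0$ really does carry an elementary first integral: reversing the above, set $\ell:=\sqrt k$, $v:=h\ell$, $u_i:=a_i+b_i\ell$ (same $a_i,b_i$) and $\psi:=\sum c_i\log u_i+v$; the same identities give $d\psi=\ell\,\omega_H$, where $\omega_H$ is the one-form associated with $H$, so $d\psi\wedge\omega_H=0$ and $d\psi\neq 0$. Thus $H$ is exceptional precisely when \eqref{elem2d} cannot be realized with all of $v$ and the $u_i$ in $K$. Now a non-constant rational first integral $\rho$ gives such a realization (take $v=\rho$ and no logarithmic terms); hence if $H$ admits a rational first integral it is not exceptional — equivalently, an exceptional $H$ has none.

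For the converse in part~(b), suppose $H$ (of the displayed form, with $K(\ell)/K$ genuinely quadratic, i.e.\ $k\notin(K^*)^2$) is not exceptional. Then \eqref{elem2d} holds with $\tilde v,\tilde u_i\in K$; since the one-forms over $K$ that wedge to zero against $\omega_H$ make up the $K$-line spanned by $\omega_H$, the left-hand side equals $\tilde\ell\,\omega_H$ for some $\tilde\ell\in K^*$, and applying $d$ yields $d\omega_H=-(d\tilde\ell/\tilde\ell)\wedge\omega_H$. Applying $d$ to $d\psi=\ell\,\omega_H$ yields, in the same way, $d\omega_H=-(d\ell/\ell)\wedge\omega_H$. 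Subtracting, $d\log(\ell/\tilde\ell)\wedge\omega_H=0$, hence $d\bigl((\ell/\tilde\ell)^2\bigr)\wedge\omega_H=0$. But $(\ell/\tilde\ell)^2=k/\tilde\ell^2\in K$, and it is non-constant, for otherwise $\ell/\tilde\ell$ would be a constant of $K(\ell)$ and $\ell\in K$, contradicting $k\notin(K^*)^2$. So $H$ has a non-constant rational first integral, a contradiction; therefore $H$ is exceptional.

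The main obstacle is this converse direction of~(b). Its mechanism is that the two integrating factors available — the algebraic $\ell$ coming from the quadratic construction and a rational $\tilde\ell$ coming from a hypothetical realization over $K$ — must have a quotient whose square lies in $K$ and is a non-constant first integral; everything else is either quoted (Proposition~\ref{cyclo}, Hilbert's Theorem~90, the relation \eqref{2dint}) or a routine computation.
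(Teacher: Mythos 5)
Your proof is correct, and for the substantive direction of (b) it rests on the same mechanism as the paper's: compare the algebraic integrating factor $\ell$ coming from the quadratic construction with a rational integrating factor coming from a hypothetical realization of \eqref{elem2d} over $K$, and turn their quotient into a rational first integral. The execution differs in two useful ways. First, you produce the rational integrating factor $\tilde\ell$ directly from the relation over $K$ (via the one-dimensionality of the annihilator of $\omega_H$, i.e.\ the Lemma preceding Proposition \ref{cyclo}), whereas the paper routes through ``not exceptional $\Rightarrow$ Darboux integrable'' and then cites Chavarriga et al.\ \cite{CGGL} for a rational integrating factor. Second, you pass from the quotient to a rational first integral by squaring, since $(\ell/\tilde\ell)^2=k/\tilde\ell^{\,2}\in K$, whereas the paper obtains an algebraic first integral and then invokes ``algebraic $\Rightarrow$ rational'' (\cite{CLPW}, Lemma 2); your version is more self-contained. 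You also make explicit two hypotheses the paper leaves tacit: $H\neq 0$, and $k\notin (K^*)^2$ for the converse in (b). The latter is genuinely needed for the statement as printed: if $k$ is a square in $K$, an $H$ of the displayed form is built entirely over $K$ and can be Darboux-integrable without a rational first integral (so neither your argument nor the paper's quotient-of-integrating-factors step would, or should, produce one); flagging this is a small improvement rather than a gap. Part (a) you treat exactly as the paper does, by assembling the computations preceding the statement, and your identification of the log-term coefficients with the displayed components of $H$ checks out.
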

\begin{proof} Part (a) is obtained from the Hamiltonian of $\psi$. As to part (b), 
    $H$ admits the integrating factor $\ell^{-1}$ by construction. If $H$ is not exceptional then it admits a Darboux first integral, hence a rational integrating factor by Chavarriga et al. \cite{CGGL}. The quotient of the two integrating factors is  nonconstant and an algebraic first integral. The existence of a rational first integral follows.
\end{proof}
Thus, to construct all exceptional vector fields, one may start with arbitrary rational functions and apply Proposition \ref{quadprop}. But there is extra work involved to preclude the existence of rational first integrals.
\subsection{Field extensions of degree greater than two}
We now consider the setting (keeping the notation) of Proposition \ref{cyclo}, with $[L:K]=n>2$, and present a general construction that yields all exceptional vector fields.\footnote{The construction is also applicable for degree two, but Proposition \ref{quadprop} provides greater detail.}\\
Denote by $K'$, resp. $L'$, the space of one-forms over $K$, resp. $L$. Then every $\beta \in L'$ admits a unique representation 
\begin{equation}\label{betaLK}
    \beta=\sum_{i=0}^{n-1}\ell^i\,\beta_i;\quad \beta_i\in K'.
\end{equation}
\begin{lemma}
    The map
    \[
    {\rm Pr}:\, L'\to L',\quad \xi\mapsto\frac1n\sum_{j=0}^{n-1}\zeta^{-j}\tau^j(\xi)
    \]
    is a $K$-linear projection from $L'$ onto $\ell\,K'$. In particular, $\beta$ as given in \eqref{betaLK} is mapped to $\ell\beta_1$.
\end{lemma}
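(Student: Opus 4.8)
The plan is to reduce everything to the $K$-basis decomposition \eqref{betaLK} together with the orthogonality relations for $n^{\rm th}$ roots of unity. The two structural inputs I would record first are: (1) $G=\langle\tau\rangle$ acts $K$-linearly on $L'$, and since $\tau$ fixes $K$ — in particular $\tau(x)=x$, $\tau(y)=y$, so $\tau(dx)=dx$ and $\tau(dy)=dy$ — each $\tau^j$ acts on $L'$ simply by applying $\tau^j$ to the coefficients, and restricts to the identity on $K'$; (2) the decomposition \eqref{betaLK} is available and unique because $1,\ell,\dots,\ell^{n-1}$ is a $K$-basis of $L$ (the minimal polynomial of $\ell$ over $K$ being $X^n-k$, as follows from Proposition~\ref{cyclo}).

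Next I would compute directly. For $\beta=\sum_{i=0}^{n-1}\ell^i\beta_i$ with $\beta_i\in K'$, using $\tau^j(\ell^i)=(\zeta^j\ell)^i=\zeta^{ij}\ell^i$ and $\tau^j(\beta_i)=\beta_i$, one gets $\tau^j(\beta)=\sum_{i=0}^{n-1}\zeta^{ij}\ell^i\beta_i$. Substituting this into the definition of ${\rm Pr}$ and interchanging the two finite sums yields
\[
{\rm Pr}(\beta)=\frac1n\sum_{i=0}^{n-1}\ell^i\beta_i\Bigl(\sum_{j=0}^{n-1}\zeta^{(i-1)j}\Bigr),
\]
and the inner sum equals $n$ when $i\equiv1\pmod n$, i.e.\ $i=1$ in the relevant range, and $0$ otherwise, since for $i\ne1$ the element $\zeta^{i-1}$ is an $n^{\rm th}$ root of unity different from $1$. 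Hence ${\rm Pr}(\beta)=\ell\beta_1$, which is the ``in particular'' assertion.

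The remaining claims then follow formally. $K$-linearity of ${\rm Pr}$ is inherited from that of each $\tau^j$ (additivity is clear, and $\tau^j(c\xi)=\tau^j(c)\tau^j(\xi)=c\,\tau^j(\xi)$ for $c\in K$). The identity ${\rm Pr}(\beta)=\ell\beta_1$ shows the image is contained in $\ell K'$; conversely, any $\ell\gamma$ with $\gamma\in K'$ has components $\beta_1=\gamma$ and $\beta_i=0$ for $i\ne1$, so ${\rm Pr}(\ell\gamma)=\ell\gamma$. Thus ${\rm Pr}$ restricts to the identity on $\ell K'$, is therefore idempotent, and has image exactly $\ell K'$; that is, ${\rm Pr}$ is the asserted $K$-linear projection onto $\ell K'$.

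I do not anticipate a genuine obstacle here. The only two points that warrant a line of care are that $\tau$ acts on one-forms by acting on coefficients (because it fixes the coordinate functions, hence their differentials, so in particular it is the identity on $K'$) and the elementary root-of-unity orthogonality; everything else is bookkeeping with finite sums.
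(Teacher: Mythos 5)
Your proof is correct and follows essentially the same route as the paper: expand $\beta$ in the $K$-basis $1,\ell,\dots,\ell^{n-1}$, use $\zeta^{-j}\tau^j(\ell^i\beta_i)=\zeta^{(i-1)j}\ell^i\beta_i$ and the root-of-unity orthogonality $\sum_{j=0}^{n-1}\zeta^{(i-1)j}=0$ for $i\neq 1$ to get ${\rm Pr}(\beta)=\ell\beta_1$. The paper treats the linearity/projection claims as immediate once this is shown, whereas you spell them out; that is just added detail, not a different argument.
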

\begin{proof}
    It suffices to verify the last assertion. For any $i$ one has
    \[
    \begin{array}{rcl}
      {\rm Pr}(\ell^i\beta_i)&=   & \frac1n\sum_j\zeta^{-j}\tau^j(\ell^i) \\
         & =&\frac1n \sum_j\zeta^{(i-1)j}\ell^i\beta_i,
    \end{array}
    \]
    and $\sum_{j=0}^{n-1}\zeta^{(i-1)j}=0$ whenever $i\not=1$.
\end{proof}
This yields a general construction principle for exceptional vector fields.
\begin{proposition}
\begin{enumerate}[(a)]
    \item  Let
    \begin{equation*}
       \widetilde \gamma=d\widetilde v+\sum \widetilde c_i\dfrac{d\widetilde u_i}{\widetilde u_i}\in L',
    \end{equation*}
    with $\widetilde v\in L$ of trace zero, all $\widetilde u_i\in L^*$ of norm one, and $\widetilde c_i\in \mathbb C$. Then \[\gamma:={\rm Pr}\,(\widetilde\gamma)=dv+\sum c_i\dfrac{du_i}{u_i}
    \]
    with $v\in L$ of trace zero, all $u_i\in L^*$ of norm one, and $c_i\in \mathbb C$. Furthermore
    \[
    \gamma=\ell\,\omega\text{  for some  }\omega\in K'.
    \]
    \item If $\omega$ does not admit a rational first integral, then $\omega$ is exceptional.
    \item Every exceptional form in $L'$ is obtained as an image under ${\rm Pr}$.
\end{enumerate}
\end{proposition}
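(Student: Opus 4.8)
The plan is to treat the three parts in order; part (a) carries essentially all of the computation, while (b) and (c) then follow quickly from material already at hand. For part (a) I would first record that a generator $\tau$ of $G=\mathrm{Gal}(L/K)$ commutes with $d$ — both on $L$ and, by extension, on $L'$ — and that $\zeta\in\mathbb C\subseteq K$, so that $\mathrm{Pr}$ is $\mathbb C$-linear and commutes with $d$. Applying $\mathrm{Pr}$ to $\widetilde\gamma=d\widetilde v+\sum_i\widetilde c_i\,d\widetilde u_i/\widetilde u_i$ and using $\tau^j(d\widetilde u_i/\widetilde u_i)=d(\tau^j\widetilde u_i)/\tau^j\widetilde u_i$, one obtains
\[
\mathrm{Pr}(\widetilde\gamma)=d\Bigl(\tfrac1n\sum_j\zeta^{-j}\tau^j(\widetilde v)\Bigr)+\sum_{i}\sum_{j}\frac{\widetilde c_i\zeta^{-j}}{n}\,\frac{d(\tau^j\widetilde u_i)}{\tau^j\widetilde u_i}.
\]
Put $v:=\tfrac1n\sum_j\zeta^{-j}\tau^j(\widetilde v)$; a short computation using $\tau(\ell)=\zeta\ell$ shows $\tau(v)=\zeta v$, so $v/\ell$ is $\tau$-invariant, i.e.\ $v\in\ell K$, which forces $\mathrm{tr}(v)=0$. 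Each $\tau^j\widetilde u_i$ has norm one, since the norm is Galois-invariant and $N(\widetilde u_i)=1$. Relabelling the doubly indexed family $\{\tau^j\widetilde u_i\}$ and its coefficients then displays $\gamma=\mathrm{Pr}(\widetilde\gamma)$ in the required form $dv+\sum_k c_k\,du_k/u_k$ with $v$ of trace zero, the $u_k$ of norm one, and $c_k\in\mathbb C$; and $\gamma=\ell\omega$ for some $\omega\in K'$ is immediate from the preceding Lemma, whose image is $\ell K'$. (For $\omega\neq0$ one must assume $\widetilde\gamma\notin\ker\mathrm{Pr}$.)

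For part (b), from $\gamma=\ell\omega$ we have $d(\ell\omega)=d\gamma=0$, so $\ell$ is an integrating factor of $\omega$, hence $d\omega=-(d\ell/\ell)\wedge\omega$, and $\psi=v+\sum_k c_k\log u_k$ is an elementary first integral of $\omega$. If $\omega$ were not exceptional, then \eqref{elem2d} would hold with $v,u_i\in K$; as the forms annihilating $\omega$ form a one-dimensional space over $K$, this yields a rational integrating factor $R\in K^*$ of $\omega$, so $d\omega=-(dR/R)\wedge\omega$ as well. Comparing the two integrating factors, $\ell/R$ satisfies $d(\ell/R)\wedge\omega=(\ell/R)(d\ell/\ell-dR/R)\wedge\omega=0$, hence is a first integral of $\omega$, algebraic over $K$ and non-constant since $\ell\notin K$ whereas $R\in K$; by \cite{CLPW}, Lemma 2, $\omega$ would then admit a rational first integral, contrary to hypothesis. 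So $\omega$ is exceptional.

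For part (c), let $\omega\in K'$ be exceptional. Then $\omega$ has an elementary first integral; passing to the minimal field $L=K(\ell)$ of Proposition \ref{cyclo} and using the normalization available in the exceptional case — replace $v$ by $v-\mathrm{tr}(v)/n$ and each $u_i$ by $u_i/N(u_i)^{1/n}$, which stay in $L$ because $\mathbb C\subseteq K$ and change neither $dv$ nor $du_i/u_i$ — we may take $v$ of trace zero and all $u_i$ of norm one, with $\widetilde\gamma:=dv+\sum c_i\,du_i/u_i=\ell\omega$ by the Lemma giving \eqref{2dint}. This $\widetilde\gamma$ has exactly the shape required in part (a); and since $\widetilde\gamma\in\ell K'$ while $\mathrm{Pr}$ is a projection onto $\ell K'$, we get $\mathrm{Pr}(\widetilde\gamma)=\widetilde\gamma=\ell\omega$. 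Thus $\omega$ is realized as an image under $\mathrm{Pr}$ of an admissible $\widetilde\gamma$.

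The one step I expect to require real care is the bookkeeping in part (a): checking that $\mathrm{Pr}$ sends the logarithmic block to another such block with norm-one arguments — this rests on $\tau$ being compatible with $d$ and on the Galois-invariance of the norm — and that the averaged $v$ genuinely has trace zero rather than merely constant trace. Parts (b) and (c) are then short deductions from Proposition \ref{cyclo}, the Proposition of Section 3, the Lemma giving \eqref{2dint}, \cite{CLPW} Lemma 2, and the projection Lemma.
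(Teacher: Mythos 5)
Your proof is correct and follows the same outline as the paper's (very terse) argument: part (a) by averaging, using that $\tau$ commutes with $d$ and preserves traces and norms, together with the projection Lemma identifying the image of ${\rm Pr}$ with $\ell\,K'$; part (c) by the normalization to trace zero and norm one plus the fact that ${\rm Pr}$ restricts to the identity on $\ell\,K'$. The only real deviation is in part (b): the paper says to repeat the argument of Proposition \ref{quadprop}(b), which obtains a rational integrating factor for a non-exceptional $\omega$ via Darboux integrability and the result of Chavarriga et al.\ \cite{CGGL}, whereas you get the rational integrating factor $R\in K^*$ directly from the $K$-rational relation \eqref{elem2d} with $L=K$ and the one-dimensionality of $\{\beta:\beta\wedge\omega=0\}$ over $K$; from there both arguments coincide (the quotient $\ell/R$ is a non-constant algebraic first integral, and \cite{CLPW}, Lemma 2, yields a rational one, contradicting the hypothesis). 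Your variant is slightly more self-contained since it avoids the external citation; the paper's version buys uniformity with the quadratic case. Your parenthetical remark that $\omega\neq 0$ requires $\widetilde\gamma\notin\ker{\rm Pr}$ is a fair point of bookkeeping that the paper leaves implicit.
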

\begin{proof} As to part (a), automorphisms preserve traces and norms, and the image of ${\rm Pr}$ is $\ell\,K'$.
    For part (b), note that $\ell$ is an integrating factor for $\omega$, and repeat the argument in the proof of Proposition \ref{quadprop}. Part (c) is immediate since ${\rm Pr}$ is a projection.
\end{proof}
\subsection{Degree restrictions}
Finally we consider elementary first integrals \eqref{elem2d}, and discuss possible degrees of the cyclic extension fields involved. The following general result indicates the relevance of the constants $c_i$.
\begin{proposition}\label{czeta}
    Assume that $\omega$ is exceptional, and let $L$ be a minimal algebraic extension of $K$, with $[L:K]=n>1$, such that \eqref{elem2d} holds with $v$ and $u_i$ in $L$, and $c_1,\ldots,c_m$ linearly independent over $\mathbb Q$.
    If $\zeta$ is a primitive $n^{\rm th}$ root of unity, then $c_1,\ldots,c_m,\zeta c_1,\ldots,\zeta c_m$ are linearly dependent over $\mathbb Q$.
\end{proposition}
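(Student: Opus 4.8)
The plan is to let the Galois group act on the basic identity \eqref{2dint}. By Proposition~\ref{cyclo} the group $G=\operatorname{Gal}(L/K)$ is cyclic of order $n$, and $\tau_0(\ell)=\zeta_0\ell$ for some generator $\tau_0$ and some primitive $n^{\mathrm{th}}$ root of unity $\zeta_0$. A given primitive $n^{\mathrm{th}}$ root $\zeta$ equals $\zeta_0^{\,a}$ with $\gcd(a,n)=1$, so $\tau:=\tau_0^{\,a}$ is again a generator of $G$ and satisfies $\tau(\ell)=\zeta\ell$; we work with this $\tau$. Put $\gamma:=\sum_i c_i\,du_i/u_i+dv$, so $\gamma=\ell\,\omega$ by the Lemma preceding Proposition~\ref{cyclo}. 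Since $\omega$ lives over $K$ we have $\tau(\omega)=\omega$, whence $\tau(\gamma)=\tau(\ell)\,\omega=\zeta\,\ell\,\omega=\zeta\,\gamma$. Writing this out and rearranging gives the key identity
\[
\sum_i c_i\,\frac{d\tau(u_i)}{\tau(u_i)}\;-\;\zeta\sum_i c_i\,\frac{du_i}{u_i}\;=\;d\bigl(\zeta v-\tau(v)\bigr),
\]
an equality of rational one-forms over $L$. We may assume all $u_i$ (hence all $\tau(u_i)$) are non-constant, since a constant $u_i$ drops out of \eqref{elem2d} and deleting it leaves the remaining $c_j$ still $\mathbb Q$-linearly independent; proving the asserted dependence for the shortened list gives it for the original one.

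Next I would convert this into numerical relations by a residue argument. Fix a smooth projective surface $V$ with function field $L$ and take $C\subset V$ a general member of a very ample linear system; then $C$ is a smooth irreducible curve, the restriction $\mathbb C(V)\to\mathbb C(C)$ is defined and nonzero on each of the finitely many functions involved, and $C$ meets every component of $\operatorname{div}(u_i)$ and of $\operatorname{div}(\tau(u_i))$. Restricting the key identity to $C$ (restriction commutes with $d$ and with $f\mapsto df/f$), and then taking the classical residue at an arbitrary $P\in C$, one gets, using $\operatorname{res}_P(df/f)=\operatorname{ord}_P f$ and $\operatorname{res}_P(dh)=0$,
\[
\sum_i c_i\,\operatorname{ord}_P\!\bigl(\tau(u_i)|_C\bigr)\;-\;\zeta\sum_i c_i\,\operatorname{ord}_P\!\bigl(u_i|_C\bigr)\;=\;0 ,
\]
an integral linear relation among $c_1,\dots,c_m,\zeta c_1,\dots,\zeta c_m$ for every point $P\in C$.

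To finish, suppose for contradiction that $c_1,\dots,c_m,\zeta c_1,\dots,\zeta c_m$ are linearly independent over $\mathbb Q$. Then the last relation forces $\operatorname{ord}_P(u_i|_C)=\operatorname{ord}_P(\tau(u_i)|_C)=0$ for all $i$ and all $P\in C$, so every $u_i|_C$ is a nonzero constant; but a non-constant $u_i$ has a zero or pole along some component $D$ of $\operatorname{div}(u_i)$, and by the choice of $C$ it meets $D$, so $u_i|_C$ is non-constant --- a contradiction. Hence the $2m$ numbers are $\mathbb Q$-linearly dependent. The one delicate step is this passage from an identity of one-forms on the surface $V$ to numerical residue relations: it requires either residues along prime divisors on $V$ or, as above, restriction to a sufficiently general curve, with care that the restriction stays faithful on the relevant functions and commutes with $d$. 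The Galois computation and the concluding linear algebra are routine.
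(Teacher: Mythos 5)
Your argument is sound in its core and takes a genuinely different route from the paper. Both proofs begin with the same Galois-twisted relation: applying the generator $\tau$ (or $\sigma=\tau^{-1}$ in the paper) with $\tau(\ell)=\zeta\ell$ to $\sum c_i\,du_i/u_i+dv=\ell\omega$ and eliminating $\ell\omega$ gives exactly the paper's identity \eqref{roseneq}, written there for $x$-derivatives rather than one-forms. From that point the strategies diverge. The paper first performs a linear change of coordinates so that, by Lemma \ref{algclose}, $\mathbb{C}(x)$ is algebraically closed in $L$, and then applies Rosenlicht's structure theorem from \cite{RosIHES} to \eqref{roseneq}; this forces all $u_i$ into $\mathbb{C}(x)$ and $v-\zeta\sigma(v)=\gamma y+\widetilde w$, after which a direct computation produces the rational first integral $a_1^n k$ of $\omega$, contradicting exceptionality. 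You instead exploit the twisted identity geometrically: on a smooth projective model of $L$ you restrict to a general very ample curve and take residues, so that $\mathbb{Q}$-independence of $c_1,\ldots,c_m,\zeta c_1,\ldots,\zeta c_m$ annihilates all the integer order coefficients, forcing each $u_i$ to restrict to a constant and contradicting its non-constancy. Your route avoids both Lemma \ref{algclose} and Rosenlicht's theorem (the paper's essential external input), at the cost of resolution of singularities and Bertini-type genericity; the paper's route stays within differential algebra and has the added benefit of exhibiting the rational first integral explicitly.

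One small case is left open. Your reduction ``delete the constant $u_i$'' tacitly assumes that at least one non-constant $u_i$ survives; if all $u_i$ are constant, the residue relations are vacuous and your contradiction disappears, while the asserted dependence of $c_1,\ldots,c_m,\zeta c_1,\ldots,\zeta c_m$ is not otherwise obtained. This case is easily excluded: if every $u_i$ is constant, then $dv=\ell\omega\neq 0$, so $v$ is a non-constant algebraic first integral of $\omega$, hence $\omega$ admits a rational first integral (see \cite{CLPW}, Lemma 2, as used in step (i) of the proof of Proposition \ref{cyclo}), contradicting exceptionality. Adding this one line closes the gap; the Galois computation, the residue identities, and the concluding linear algebra are all correct as you state them.
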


The proof of Proposition \ref{czeta} is a generalization of the proof of Theorem 4 in \cite{CLPW}, and is based on a theorem by Rosenlicht \cite{RosIHES}. The following auxiliary result extends Lemma 5 in \cite{CLPW}.
\begin{lemma}\label{algclose}
    Let $L=K(\ell)$ as in Proposition \ref{cyclo}, and $k=\ell^n\in K$. Moreover, write
    \begin{equation}\label{exponents}
    k=q_1^{e_1}\cdots q_r^{e_r}
    \end{equation}
    with relatively prime irreducible polynomials $q_i\in\mathbb C[x,\,y]$, and nonzero integers $e_i$. If no $q_i$ lies in $\mathbb C[x]$, then $\mathbb C(x)$ is algebraically closed in $L$.
\end{lemma}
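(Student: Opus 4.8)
The plan is to regard $K=\mathbb C(x,y)$ as the rational function field $\mathbb C(x)(y)$ in the single variable $y$ over the ground field $\mathbb C(x)$, and to play off unique factorisation in the polynomial ring $F[y]$, where $F$ denotes the algebraic closure of $\mathbb C(x)$ in $L$; the assertion to be proved is then exactly $F=\mathbb C(x)$. First I would argue by contradiction: if $F\neq\mathbb C(x)$, then, since $\mathbb C(x)$ is already algebraically closed in the rational function field $K$, the field $F$ is not contained in $K$, so the compositum $F(y):=FK$ inside $L$ strictly contains $K$. By Proposition~\ref{cyclo} the extension $L/K$ is cyclic of degree $n$, hence has exactly one intermediate field of each degree $d\mid n$; since a generator $\tau$ of $\mathrm{Gal}(L/K)$ acts by $\ell\mapsto\zeta\ell$ with $\zeta$ a primitive $n$-th root of unity (Proposition~\ref{cyclo}(b)), the element $\kappa:=\ell^{\,n/d}$ with $d:=[F(y):K]\geq 2$ is fixed precisely by $\langle\tau^{d}\rangle$, so that $K(\kappa)$ is the unique intermediate field of degree $d$ and hence $F(y)=K(\kappa)$. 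This identification gives an element $\kappa\in F(y)$ with $\kappa^{d}=\ell^{n}=k$ and $d\mid n$, $d\geq 2$, and is the structural heart of the argument.

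Next I would run the divisibility computation in $F[y]$. Because $\mathbb C(x)\subseteq F$, each $q_i$ lies in $F[y]$; since $q_i\notin\mathbb C[x]$ it has positive degree in $y$, hence admits an irreducible factor $\pi_i\in F[y]$, and — the characteristic being zero — the polynomial $q_i$, irreducible over $\mathbb C(x)$, stays squarefree over $F$, while the $q_i$ remain pairwise coprime over $F$ (by Gauss's lemma they are non-associate irreducibles already in $\mathbb C(x)[y]$, and coprimality persists under field extension). Thus the $\pi_i$-adic valuation on $F(y)$ satisfies $v_{\pi_i}(k)=v_{\pi_i}\!\bigl(\textstyle\prod_{j}q_j^{e_j}\bigr)=e_i$. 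On the other hand $v_{\pi_i}(k)=v_{\pi_i}(\kappa^{d})=d\,v_{\pi_i}(\kappa)\equiv 0\pmod d$. Hence $d\mid e_i$ for every $i$, and therefore
\[
k=\Bigl(\,\prod_{i}q_i^{\,e_i/d}\Bigr)^{\!d}\in(K^{*})^{d}.
\]

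To close the argument I would invoke the minimality built into $[L:K]=n$. Choosing a prime $p_0\mid d$ yields $k\in(K^{*})^{p_0}$ with $p_0\mid n$; but $[L:K]=[K(\sqrt[n]{k}):K]=n$ forces $k\notin(K^{*})^{p_0}$ for every prime $p_0\mid n$ — for if $k=c^{p_0}$ then $\ell^{\,n/p_0}$ differs from $c$ only by a $p_0$-th root of unity (and these lie in $\mathbb C\subseteq K$), so it belongs to $K$ and $[L:K]\le n/p_0<n$. The resulting contradiction gives $F=\mathbb C(x)$, i.e.\ $\mathbb C(x)$ is algebraically closed in $L$.

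\textbf{Expected main obstacle.} The part most likely to need care is the structural step of the first paragraph: one must be sure that the whole of $F$ sits inside a proper subextension of $L/K$ whose generator is genuinely a $d$-th root of $k$, so that the factorisation argument in $F[y]$ has something to bite on. (If this felt awkward, an alternative delivering the same conclusion $d\mid e_i$ is to note that $F(y)/K$ is a constant field extension of the one-variable function field $K/\mathbb C(x)$, hence unramified at every place of $K$, and to combine this with the usual Kummer ramification index $e=d/\gcd\!\bigl(d,v_P(k)\bigr)$.) A smaller point in the second paragraph is that each $q_i$ must force its \emph{full} exponent $e_i$ at a prime of $F[y]$, which is precisely where the characteristic-zero hypothesis enters through squarefreeness.
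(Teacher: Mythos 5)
Your proof is correct, but it takes a genuinely different route from the paper's. The paper argues element by element: it takes an arbitrary $R\in L$ algebraic over $\mathbb C(x)$, shows $R_y=0$ by differentiating the minimal polynomial over $\mathbb C(x)$, expands $R=\sum_{i=0}^{n-1}b_i\ell^i$ and uses $\ell_y/\ell=\frac1n k_y/k$ to conclude $b_i^nk^i\in\mathbb C(x)$ whenever $b_i\neq0$; comparing prime factorizations then forces either $n\mid e_j$ (excluded after first normalizing $0<|e_j|<n$) or $k$ to be a proper power in $K$ (excluded by the minimality $[L:K]=n$), so $b_1=\cdots=b_{n-1}=0$ and $R=b_0\in\mathbb C(x)$. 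You instead work structurally with the whole algebraic closure $F$ of $\mathbb C(x)$ in $L$: cyclicity of $L/K$ and the action $\tau(\ell)=\zeta\ell$ from Proposition \ref{cyclo} identify the compositum $FK=F(y)$ with $K(\ell^{n/d})$, where $d=[FK:K]\ge 2$ if $F\neq\mathbb C(x)$, and the $\pi_i$-adic valuations in the UFD $F[y]$ (where squarefreeness and pairwise coprimality of the $q_i$ persist, and the hypothesis $q_i\notin\mathbb C[x]$ guarantees each $q_i$ is a non-unit there) give $d\mid e_i$ for all $i$, hence $k\in(K^*)^d$, contradicting $[K(\ell):K]=n$. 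Both proofs rest on the same two inputs --- that no $q_i$ lies in $\mathbb C[x]$, and the minimality $[L:K]=n$ --- and both end in an exponent-divisibility contradiction; the difference is that the paper reaches it by a derivation computation in the basis $1,\ell,\dots,\ell^{n-1}$, while you reach it via the Galois correspondence for the cyclic extension plus valuation theory. Your version buys a cleaner run (no normalization of the exponents, no differentiation step, and the divisibility $d\mid e_i$ comes out in one stroke), at the price of invoking uniqueness of intermediate fields; the paper's version is more computational and does not need that structural fact. One small remark: your final detour through a prime $p_0\mid d$ is unnecessary, since $d\mid e_i$ for all $i$ already yields $\ell^{n/d}\in K$ up to a constant factor, which contradicts $[L:K]=n$ directly.
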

\begin{proof}
We may assume that \eqref{exponents} holds with $0<|e_i|<n$ for all $i$, since any exponent of absolute value $\geq n$ for some $q_i$ implies $\ell=q_i\cdot\widehat\ell$, with $L=K(\ell)=K(\widehat \ell)$.\\
Now let $R\in L$ be algebraic over $\mathbb C(x)$. We have to show that $R\in\mathbb C(x)$.
\begin{enumerate}[(i)]
    \item With the minimal polynomial of $R$ over $\mathbb C(x)$ we get
    \[
    R^p+\sum_{i=1}^{p}a_i R^{p-i}=0,\quad\text{ with  }a_i\in \mathbb C(x).
    \] 
    Differentiation with respect to $y$ shows
    \[
    R_y\,\left(pR^{p-1}+\sum (p-i)a_i\,R^{p-i-1}\right)=0,
    \]
     and the second factor is nonzero, therefore $R_y=0$.
     \item From the representation
     \[
     R=\sum_{i=0}^{n-1}b_i\ell^i,\quad b_i\in K,
     \]
      and recalling $\ell_y/\ell=\frac1n k_y/k$,  we find
     \[
     0=R_y=b_{0,y}+\sum_{i=1}^{n-1}\left(b_i,y+\frac in\dfrac{k_y}{k}b_i\right)\ell^i.
     \]
     Therefore $b_0\in \mathbb C(x)$, and 
     \[
     n\cdot\dfrac{b_{i,y}}{b_i}+i\cdot\dfrac{k_y}{k}=0\text{  if  }b_i\not=0,\quad 1\leq i \leq n-1,
     \]
     which implies 
     \[
     b_i^n\cdot k^i \in \mathbb C(x),\quad 1\leq i\leq n-1.
     \]
     Now assume $b_i\not=0$. Comparing prime factors in numerators and denominators of the terms in this expression yields
     \[
     b_i=q_1^{m_{i,1}}\cdots q_r^{m_{i,r}}\cdot \widetilde b_i,\quad\text{with  } \widetilde b_i\in\mathbb C(x),
     \]
     and
     \[
     n\cdot m_{i,j}+i\cdot e_j=0.
     \]
     \item For $i=1$ this relation implies $n|e_j$; a contradiction to $|e_j| <n$. So $b_1=0$. For $i>1$ the relation implies that $d:={\rm gcd}\,(n,i)>1$, and therefore $e_j=d\cdot\widetilde e_j$ with integers $\widetilde e_j$.
     So 
     \[
     k=\left(q_1^{\widetilde e_1}\cdots q_r^{\widetilde e_r}\right)^d,
     \]
     and 
     \[
     \ell^{n/d}=q_1^{\widetilde e_1}\cdots q_r^{\widetilde e_r}\in K.
     \]
     But this implies $[L:K]=[K(\ell):K]\leq n/d$; a contradiction. \\
     To summarize, we have $b_1=\cdots=b_{n-1}=0$, and $R=b_0\in \mathbb C(x)$.
\end{enumerate}
\end{proof}
\begin{proof}[Proof of Proposition \ref{czeta}]
    \begin{enumerate}[(i)]
        \item By a $\mathbb C$-linear transformation $\begin{pmatrix}
            x\\y
        \end{pmatrix}\mapsto A\begin{pmatrix}
            x\\y
        \end{pmatrix}$
         we may assume that no prime factor in the numerator or the denominator of $k$ lies in $\mathbb C(x)$, hence $C(x)$ is algebraically closed in $L$ by Lemma \ref{algclose}.
         \item We have 
         \[
    \sum c_i\dfrac{du_i}{u_i}+dv=\ell\omega
    \]
from \eqref{elem2d}, hence 
\[
\sum c_i\dfrac{u_{i,x}}{u_i}+v_x=-\ell Q.
\]
With the $K$-automorphism $\sigma$ that sends $\ell$ to $\zeta^{-1}\ell$, one finds
\[
\sigma(v_x)+\sum c_i\dfrac{\sigma(u_{i,x})}{\sigma(u_i)}=\zeta^{-1}\ell Q,
\]
and furthermore
\begin{equation}\label{roseneq}
\left(v-\zeta\sigma(v)\right)_x-\sum\zeta c_i\dfrac{\sigma(u_i)_x}{\sigma(u_i)}+\sum c_i\dfrac{u_{i,x}}{u_i}=0.
\end{equation}
\item Now assume that $c_1,\ldots, c_m,\zeta c_1,\ldots,\zeta c_m$ are linearly independent over $\mathbb Q$. Then the theorem in Rosenlicht \cite{RosIHES} is applicable (use this result with $k\to \mathbb C(x)$, $K\to L$, $t \to y$ and $'=\frac{\partial}{\partial x}$), due to Lemma \ref{algclose}. By this theorem, all $u_i$ are in $\mathbb C(x)$, and $w:=v-\zeta\sigma(v)$ satisfies
\[
w=\gamma y+\widetilde w,\quad \text{with  }\gamma\in\mathbb C,\,\widetilde w\in \mathbb C(x).
\]
\item For $v$, we have
\[
v=\sum_{i=0}^{n-1}a_i\ell^i,\quad\text{ with  }a_i\in \mathbb C(x,y),
\]
and
\[
v-\zeta\sigma(v)=\sum_{i=0}^{n-1}(1-\zeta^{1-i})a_i\ell^i.
\]
Comparing this with the expression for $w$ in (iii), one sees that
\[
a_0=(1-\zeta^{-1})\left(\gamma y+\widetilde w\right)\in \mathbb C(x,y);\quad a_i=0\text{  for all  }i\geq 2,
\]
hence $v=a_0+a_1\ell$. Moreover
\[
dv= da_0+\left(da_1+\frac 1n\frac{dk}{k}a_1\right)\ell.
\]
\item Now, comparing expansions in powers of $\ell$ in \eqref{2dint} shows, in view of (iii), that 
\[
\omega = da_1+\frac 1n\frac{dk}{k}a_1,
\]
and 
\[
nk\omega=d\left(a_1^n\cdot k\right).
\]
By this equality, $\omega$ admits a rational first integral. Thus the assumption of linear independence of $c_1,\ldots,c_m,\zeta c_1,\ldots, \zeta c_m$ yields a contradiction. 
    \end{enumerate}
\end{proof}
The following corollary shows that only degree two field extensions are possible for many choices of the constants $c_i$. Moreover it includes a generalization of \cite{CLPW}, Theorem 4 (which in \cite{CLPW} was proven only for prime degree).
\begin{corollary} Assume that $\omega$ is exceptional, and that \eqref{elem2d} holds. 
\begin{enumerate}[(a)]
    \item If $c_1,\ldots,c_m,\zeta c_1,\ldots,\zeta c_m$ are linearly independent over $\mathbb Q$ for all primitive $n^{\rm th}$ roots of unity, $n\geq 3$, then $[L:K]=2$.
    \item In particular, whenever $c_1,\ldots,c_m$ are linearly independent over the algebraic closure $\overline{\mathbb Q}$ of $\mathbb Q$  in $\mathbb C$, then $[L:K]=2$.
    \item  In particular, whenever \eqref{elem2d} holds with $m=1$, thus an exceptional vector field admits a first integral of the form $\log u+v$, then $[L:K]=2$.
\end{enumerate}

\end{corollary}

\end{document}